\newtheorem{theo}{Th\'{e}or\`{e}me}[section]
\newtheorem{prop}[theo]{Proposition}
\newtheorem{lem}[theo]{Lemme}
\newtheorem{cor}[theo]{Corollaire}
\theoremstyle{definition}
\newtheorem{defi}[theo]{D\'efinition}
\theoremstyle{remark}
\newtheorem{rema}[theo]{Remarque}
\newcommand{\bthe}{\begin{theo}}
\newcommand{\ble}{\begin{lem}}
\newcommand{\bpr}{\begin{prop}}
\newcommand{\bco}{\begin{cor}}
\newcommand{\bde}{\begin{defi}}
\newcommand{\ethe}{\end{theo}}
\newcommand{\ele}{\end{lem}}
\newcommand{\epr}{\end{prop}}
\newcommand{\eco}{\end{cor}}
\newcommand{\ede}{\end{defi}}
\newcommand{\Z}{{\mathbb Z}}
\newcommand{\C}{{\mathbb C}}
\def\P{{\bf P}}
\numberwithin{equation}{section}
\newfont{\gothic}{eufb10}
\begin{document}

 \title[$CH_{0}$-trivialit\'e   d'hypersurfaces cubiques] {$CH_{0}$-trivialit\'e    universelle d'hypersurfaces cubiques presque diagonales}

\author{Jean-Louis Colliot-Th\'el\`ene}
\email{jlct@math.u-psud.fr}
\address{Laboratoire de Math\'ematiques d'Orsay, Universit\'e Paris-Sud, CNRS, Universit\'e
Paris-Saclay, 91405 Orsay, France}

%
 
%
\classification{14E08, 14M20, 14C15} 
\keywords{Chow group of zero-cycles, decomposition of the diagonal, cubic hypersurfaces}
\thanks{}

\begin{abstract}
 If a smooth cubic hypersurface of dimension at least 2 over the complex field is defined by the vanishing
 of a sum of cubic forms in independent variables and each of these forms involves at most 
 3 variables, then the cubic hypersurface is universally $CH_{0}$-trivial : there is an
 integral Chow  decomposition of the diagonal.
\end{abstract}

\maketitle


\section*{Introduction}
\label{sec:introduction}

Pour $X$ une vari\'et\'e propre sur un corps  on note
$CH_{0}(X)$ le groupe de Chow des z\'ero-cycles  de $X$ modulo l'\'equivalence rationnelle
et on note $A_{0}(X) \subset CH_{0}(X)$ le sous-groupe des classes de z\'ero-cycles de degr\'e z\'ero.

On dit   \cite{ACTP, CTP} qu'une $\C$-vari\'et\'e projective et lisse $X$ est universellement $CH_{0}$-triviale
si, pour tout corps $F$ contenant $\C$ (et l'on ne consid\'erera d\'esormais que de tels corps),
l'homomorphisme ${\rm deg}_{F} : CH_{0}(X_{F}) \to�\Z$ est un isomorphisme, soit encore
$A_{0}(X_{F})=0$.

Si $X$ est rationnelle, ou stablement rationnelle, ou r\'etractilement rationnelle, alors
$X$ est universellement $CH_{0}$-triviale \cite[Lemme 1.5]{CTP}. 
Ceci a \'et\'e utilis\'e dans divers travaux
r\'ecents pour infirmer la rationalit\'e stable de certaines vari\'et\'es (voir le rapport \cite{P}).

La question de la rationalit\'e (stable) des hypersurfaces cubiques lisses $X \subset \P^n_{\C}$ $(n\geq 3)$ 
a fait et continue \`a faire  l'objet de nombreux travaux. Pour tout $n\geq 3$ impair, on conna\^{\i}t depuis longtemps des familles de telles hypersurfaces dans 
$ \P^n_{\C}$    qui sont rationnelles. 
Certaines sont classiques, d'autres, dans $\P^5_{\C}$,  ont \'et\'e obtenues par B. Hassett
\cite{Hrat}.  Pour $n \geq 4$ pair,  on ne conna\^{\i}t aucune  hypersurface cubique lisse dans 
$ \P^n_{\C}$  
 qui soit rationnelle, ou m\^eme seulement r\'etractilement rationnelle.  On doute que toutes soient (stablement) rationnelles.

On dit qu'une vari\'et\'e irr\'eductible $X$ sur $\C$ de dimension $m$  est unirationnelle de degr\'e $d$
s'il existe une application rationnelle, dominante, de degr\'e $d$, de $\P^m_{\C} $ vers $X$.
On sait que toute hypersurface cubique lisse $X \subset \P^n_{\C}$ $(n\geq 3)$ 
est unirationnelle de degr\'e 2. Faute d'\'etablir la rationalit\'e d'une telle hypersurface $X$, on s'int\'eresse
\`a la question de l'unirationalit\'e de degr\'e impair de $X$, qui entra\^{\i}ne
que $X$ est universellement $CH_{0}$-triviale. Ceci a \'et\'e \'etabli pour certaines familles 
d'hypersurfaces cubiques dans $\P^5_{\C}$   \cite[\S 7]{HT} et \cite{H}.

 C.~Voisin \cite{V} a r\'ecemment \'etudi\'e  la question de la $CH_{0}$-trivialit\'e universelle des hypersurfaces cubiques.
 Pour $X$ de dimension 3, elle a donn\'e un crit\`ere \cite[Cor. 4.4]{V} en termes de la classe minimale sur la jacobienne interm\'ediaire.
 Cela lui permet  \cite[Thm. 4.5]{V} de montrer l'existence de telles hypersurfaces $X \subset \P^4_{\C}$, dont l'hypersurface de Fermat, qui sont 
 universellement $CH_{0}$-triviales.
Par ailleurs, elle \'etablit    \cite[Thm. 5.6]{V}  la $CH_{0}$-trivialit\'e universelle pour beaucoup d'hypersurfaces
 cubiques lisses $X \subset \P^5_{\C}$ qui sont sp\'eciales au sens de Hassett \cite{Hspec}.

\medskip

Dans cet article, je montre  de fa\c con  \'el\'ementaire : 
 {\it Pour tout entier $n \geq 4$,  toute hypersurface cubique $X$ lisse dans $\P^n_{\C}$
dont l'\'equation s'\'ecrit   
$\sum_{i}  \Phi_{i}$, o\`u les $\Phi_{i}$ sont  des formes homog\`enes \`a variables s\'epar\'ees et chacune  a au plus trois variables,
est universellement $CH_{0}$-triviale}.

\medskip

Il existe donc des hypersurfaces cubiques lisses $X$
de toute dimension $\geq 2$ qui sont universellement $CH_{0}$-triviales, ce qui
ne semblait pas connu pour  $X$ de dimension impaire au moins \'egale \`a 5.

\medskip

Je remercie Claire Voisin pour diverses suggestions.

\section{Rappels}

Le lemme suivant est un cas facile de \cite[Prop. 1.8]{CTP},
dont il r\'esulte par consid\'eration de la
 projection $X\times_{\C}Y \to Y$.

\begin{lem}\label{produit} 
Soient $X$ et  $Y$ deux vari\'et\'es connexes projectives et lisses sur $\C$.
Si $X$ est universellement $CH_{0}$-triviale, alors, pour tout corps $F$,
la projection $CH_{0}(X_{F}\times_{F} Y_{F}) \to CH_{0}(Y_{F})$ est un isomorphisme, et, pour tout
choix d'un point $P \in X(\C)$,  le morphisme $Y \to X \times Y$ donn\'e par $M \mapsto (P,M)$
induit un isomorphisme  $CH_{0}(Y_{F}) \to CH_{0}(X_{F}\times_{F} Y_{F})$.
\end{lem}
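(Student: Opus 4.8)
The plan is to exploit the section $s_{P}$ of the projection to $Y$ together with a fibrewise use of the hypothesis on $X$. Write $p\colon X\times Y\to Y$ for the projection and $s_{P}\colon Y\to X\times Y$, $M\mapsto (P,M)$, so that $p\circ s_{P}=\mathrm{id}_{Y}$. After base change to $F$ and proper pushforward of zero-cycles this gives $p_{*}\circ (s_{P})_{*}=\mathrm{id}$ on $CH_{0}(Y_{F})$; hence $(s_{P})_{*}$ is injective and $p_{*}$ is surjective. So it is enough to prove that $(s_{P})_{*}\circ p_{*}$ is the identity on $CH_{0}((X\times Y)_{F})$: this forces $p_{*}$ and $(s_{P})_{*}$ to be mutually inverse isomorphisms, which is precisely the statement.

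Since a zero-cycle class is a $\Z$-combination of classes of closed points, it suffices to check $(s_{P})_{*}p_{*}[w]=[w]$ for $w$ a closed point of $(X\times Y)_{F}$. Put $y=p(w)$, a closed point of $Y_{F}$ with residue field $\kappa:=\kappa(y)$, which is a finite extension of $F$ and in particular contains $\C$; let $n=[\kappa(w):\kappa]$. Because $p$ is the base change of $X\to\mathrm{Spec}\,\C$, the scheme-theoretic fibre $p^{-1}(y)$ is the smooth integral $\kappa$-variety $X_{\kappa}=X\times_{\C}\kappa$, and the fibre inclusion $i_{y}\colon X_{\kappa}\hookrightarrow (X\times Y)_{F}$ is a closed immersion. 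Both $w$ and the point $(P,y)=s_{P}(y)$ lie on this fibre: viewed in $X_{\kappa}$, the first is a closed point of degree $n$ over $\kappa$ and the second is the $\kappa$-rational point $P_{\kappa}$. As $X$ is universally $CH_{0}$-trivial and $\kappa\supset\C$, the degree map $CH_{0}(X_{\kappa})\to\Z$ is an isomorphism, whence $[w]=n\,[P_{\kappa}]$ in $CH_{0}(X_{\kappa})$. Pushing forward by the proper morphism $i_{y}$ (which preserves rational equivalence and sends these two cycles to $[w]$ and $n\,[(P,y)]$ respectively) yields $[w]=n\,[(P,y)]$ in $CH_{0}((X\times Y)_{F})$. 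On the other hand $p_{*}[w]=n\,[y]$ and $(s_{P})_{*}[y]=[(P,y)]$, so $(s_{P})_{*}p_{*}[w]=n\,[(P,y)]=[w]$, as required.

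This is, of course, just the verification that the projection $p$ is a universally $CH_{0}$-trivial morphism, so that the lemma becomes the announced special case of \cite[Prop. 1.8]{CTP}. There is no genuine difficulty; the only points to watch are that every residue field of a closed point of $Y_{F}$ again contains $\C$, so that the hypothesis on $X$ applies on each fibre, and that the fibre of $p$ over such a point is honestly $X$ base-changed to that residue field, which is what makes the pushforward of fibrewise zero-cycles transparent.
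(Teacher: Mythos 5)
Your argument is correct, and it is essentially the paper's: the paper simply invokes \cite[Prop.~1.8]{CTP} applied to the projection $X\times_{\C}Y\to Y$, and your fibrewise verification (each fibre over a closed point of $Y_{F}$ being $X_{\kappa}$ with $\kappa\supset\C$, so that $[w]=n[P_{\kappa}]$ and hence $(s_{P})_{*}\circ p_{*}=\mathrm{id}$, combined with $p_{*}\circ (s_{P})_{*}=\mathrm{id}$ from the section) is precisely the proof of that special case spelled out. Nothing is missing.
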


\begin{prop}\label{nihilnovumsubsole}
Soient $n\geq 2$ et $m\geq 2$ des entiers et
 $f(x_{1}, \dots, x_{n})$ et $g(y_{1}, \dots, y_{m})$ des formes homog\`enes  
 de m\^{e}me degr\'e $d$, \`a coefficients dans un corps $k$ de caract\'eristique premi\`ere \`a $d$,
 dont l'annulation d\'efinit des hypersurfaces lisses.
 Soit $X_{f} \subset \P^n_{k}$, resp. $X_{g} \subset \P^m_{k}$,
 l'hypersurface
  lisse 
   d\'efinie par  $f(x_{1}, \dots, x_{n}) + x_{0}^d=0$,
 resp. par $g(y_{1}, \dots, y_{m)} +y_{0}^d=0$. 
L'application rationnelle de $\P^n_{k} \times_{k} \P^m_{k}$ vers $\P^{n+m-1}_{k}$
envoyant le point de coordonn\'ees bihomog\`enes $(x_{0}, \dots, x_{n}; y_{0}, \dots, y_{m})$ sur le point de coordonn\'ees homog\`enes $(x_{1}/x_{0}, \dots, x_{n}/x_{0}, y_{1}/y_{0}, \dots, y_{m}/y_{0})$ induit une application rationnelle dominante de degr\'e $d$ de $X_{f} \times_{k} X_{g}$ sur l'hypersurface lisse dans $\P^{n+m-1}_{k}$ d\'efinie par l'\'equation
   $$f(z_{1}, \dots, z_{n}) - g(z_{n+1}, \dots, z_{n+m)}=0.$$
 \end{prop}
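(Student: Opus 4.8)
\medskip
\noindent\emph{Proof sketch.} The argument is completely elementary: describe $X_{f}$ and $X_{g}$ in affine charts and compute directly the fibres of the rational map restricted to $X_{f}\times_{k}X_{g}$. First I would work on the dense open set $U\subset X_{f}\times_{k}X_{g}$ where $x_{0}\neq 0$ and $y_{0}\neq 0$; it is dense because $X_{f}\times_{k}X_{g}$ is integral, each factor being a smooth geometrically connected hypersurface of positive dimension. On $\{x_{0}\neq 0\}$, normalising $x_{0}=1$ identifies $X_{f}$ with the affine hypersurface $\{f(x_{1},\dots,x_{n})=-1\}$ in $\mathbb{A}^{n}_{k}$, and likewise $X_{g}$ with $\{g(y_{1},\dots,y_{m})=-1\}$. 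In particular $(x_{1},\dots,x_{n})$ and $(y_{1},\dots,y_{m})$ never vanish on $U$, so the map of the statement, which on $U$ reads $(x,y)\mapsto(x_{1}:\cdots:x_{n}:y_{1}:\cdots:y_{m})$, is defined everywhere on $U$; since $U$ is dense this yields the asserted rational map from $X_{f}\times_{k}X_{g}$ to $\mathbb{P}^{n+m-1}_{k}$.

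Next I would check that the image lies in the hypersurface $Y$ defined by $f(z_{1},\dots,z_{n})-g(z_{n+1},\dots,z_{n+m})=0$: evaluating that form on the representative $(x_{1},\dots,x_{n},y_{1},\dots,y_{m})$ of the image point gives $(-1)-(-1)=0$, and since the form is homogeneous this is independent of the chosen representative. (One should also recall that $Y$ is smooth, as the statement claims: if all partial derivatives of $f-g$ vanished at a point of $\mathbb{P}^{n+m-1}$, then smoothness of $\{f=0\}$ and of $\{g=0\}$, together with the Euler relation, which is available since the characteristic does not divide $d$, would force $z_{1}=\cdots=z_{n+m}=0$, impossible.)

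Finally, for dominance and the degree, I would compute the fibre over a point $(z_{1}:\cdots:z_{n+m})$ of $Y$ with $f(z_{1},\dots,z_{n})\neq 0$; such points form a dense open of $Y$, since otherwise $Y$ would be contained in $\{f=0\}\cap\{g=0\}$, which has dimension $n+m-3$, too small. On $Y$ one also has $g(z_{n+1},\dots,z_{n+m})=f(z_{1},\dots,z_{n})\neq 0$. A preimage in $U$, in its normalisation $x_{0}=y_{0}=1$, is necessarily of the form $x_{i}=\mu z_{i}$ and $y_{j}=\mu z_{n+j}$ for a \emph{single} scalar $\mu$, and the two conditions $f(x)=-1$, $g(y)=-1$ become $\mu^{d}f(z_{1},\dots,z_{n})=-1$ and $\mu^{d}g(z_{n+1},\dots,z_{n+m})=-1$, that is, twice the same equation on $Y$. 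Since the characteristic of $k$ does not divide $d$ and $f(z_{1},\dots,z_{n})\neq 0$, this equation in $\mu$ has exactly $d$ distinct roots in $\overline{k}$; hence the general fibre consists of $d$ reduced points, and the rational map is dominant and generically finite of degree $d$.

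I do not expect a genuine obstacle. The one point that needs care is the bookkeeping of the projective normalisations: that the single free scalar $\mu$ suffices to adjust the $x_{i}$ and the $y_{j}$ simultaneously, and that the identity $f(z)=g(z)$ along $Y$ is exactly what makes the two equations in $\mu$ compatible --- and it is this compatibility that carries the whole content of the ``degree $d$'' assertion.
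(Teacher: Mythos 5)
Your proof is correct. The paper in fact gives no proof of this proposition: it only records that the fact was used by Shioda and Katsura \cite{SK}, and your affine-chart computation is precisely that classical argument, with the two essential points correctly isolated --- a single scalar $\mu$ relates the normalisation $x_{0}=y_{0}=1$ to a chosen representative of the image point, and the identity $f(z)=g(z)$ along $Y$ makes the two conditions $\mu^{d}f(z)=-1$ and $\mu^{d}g(z)=-1$ coincide, the resulting equation in $\mu$ being separable because $\mathrm{car}(k)\nmid d$ and $f(z)\neq 0$. If you want to bypass the count of points in a general geometric fibre, observe that your computation identifies the generic fibre of $U\to Y$ with $\operatorname{Spec}\,k(Y)[\mu]/(\mu^{d}f+1)$, so that $k(X_{f}\times_{k}X_{g})=k(Y)(\mu)$ has degree exactly $d$ over $k(Y)$ (irreducibility of $\mu^{d}f+1$ being automatic since $X_{f}\times_{k}X_{g}$ is integral).
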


Ce fait a \'et\'e utilis\'e par Shioda et Katsura \cite{SK}
dans l'\'etude des conjectures de Hodge et de Tate pour les hypersurfaces diagonales.

 \begin{prop}\label{degre2}
 Soit $X \subset \P^n_{\C}$, $n \geq 3$, une hypersurface cubique lisse.
 
 (i)  $X$ est unirationnelle de degr\'e 2.
 
 (ii)  Pour tout corps $F$, on a $2 A_{0}(X_{F})=0$.
 \end{prop}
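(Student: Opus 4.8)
Part (i) is the classical fact about smooth cubic hypersurfaces. The plan is to recall the standard double-projection argument. Pick a line $L \subset X$ defined over $\C$ — such a line exists since $n \geq 3$ and the Fano variety of lines is nonempty (in fact positive-dimensional). Projection from $L$ exhibits a map which, when analyzed carefully, gives the degree-$2$ unirational parametrization: through a general point of $X$ there pass finitely many lines meeting $L$, and the generic such configuration yields a $2$-to-$1$ dominant rational map from a $\bP^{n-1}$ (or a projective bundle over $L\cong \bP^1$) onto $X$. Alternatively, and perhaps more cleanly for our purposes, one picks a point $P \in X(\C)$ and uses the tangent-space construction: the line through $P$ and a general point $M$ of $\bP^n$ meets $X$ in a third point; expressing this gives a rational map $\bP^{n-1} \dashrightarrow X$ of degree $2$ (the fibre over a general point of $X$ consists of the two other points of $X$ on the line through $P$, accounting for the residual intersection). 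I would simply cite the classical reference for this and not reproduce the computation.

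Part (ii) is the interesting point, and here the idea is to transfer the degree-$2$ unirationality into an identity in the Chow group of zero-cycles over an arbitrary field $F \supseteq \C$. Let $\varphi : \bP^{n-1}_{\C} \dashrightarrow X$ be the degree-$2$ dominant rational map from (i). Over $F$, resolving the indeterminacy of $\varphi$ yields a smooth projective variety $\widetilde{Z}$ with a birational morphism $\widetilde{Z} \to \bP^{n-1}_F$ and a generically finite morphism $\psi : \widetilde{Z} \to X_F$ of degree $2$. Since $\bP^{n-1}$ is rational, it is universally $CH_0$-trivial (by the result cited in the introduction, \cite[Lemme 1.5]{CTP}), and a birational morphism between smooth projective varieties over $F$ induces an isomorphism on $CH_0$; hence $CH_0(\widetilde{Z}) \xrightarrow{\sim} \Z$ and $A_0(\widetilde{Z}) = 0$. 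Now apply the proper pushforward $\psi_* : CH_0(\widetilde{Z}) \to CH_0(X_F)$ and the flat/refined pullback (Gysin) $\psi^* : CH_0(X_F) \to CH_0(\widetilde{Z})$ associated to the generically finite morphism $\psi$; the composition $\psi_* \circ \psi^*$ is multiplication by $\deg \psi = 2$ on $CH_0(X_F)$. Since it factors through $A_0(\widetilde{Z}) = 0$ on degree-zero cycles, we get $2 A_0(X_F) = 0$.

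The one technical point deserving care is the existence of $\psi^*$: a pullback of zero-cycles along a generically finite morphism of smooth varieties is available either as a refined Gysin map (the morphism $\psi$ factors as a closed immersion $\widetilde{Z} \hookrightarrow \widetilde{Z} \times_F X_F$ followed by the smooth projection, and one uses the Gysin map for the regular immersion of the smooth $\widetilde{Z}$) or, more elementarily, by noting that for a closed point $x \in X_F$ with residue field $F(x)$, the base change $\widetilde{Z}_{F(x)} \to \operatorname{Spec} F(x)$ is a smooth projective curve-free... rather: one defines $\psi^*[x]$ as $[\widetilde{Z} \times_{X_F} \operatorname{Spec} F(x)]$, a zero-cycle of degree $2[F(x):F]$, and checks it is well-defined on rational equivalence by a standard moving/specialization argument, or simply invokes \cite[\S 8]{Fulton} for the Gysin homomorphism associated to the local complete intersection morphism $\psi$. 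Either way this is routine; the substantive content is entirely the combination ``$\bP^{n-1}$ universally $CH_0$-trivial'' $+$ ``$\varphi$ has degree $2$''. I expect no real obstacle here — the whole point of Part (ii) is that it is a soft consequence of (i).
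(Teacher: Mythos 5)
Your proposal is correct in substance, and there is little to compare it against: the paper gives no proof of this proposition, contenting itself with citing \cite{ACTP} (Thm.~2.2 for (i), Cor.~2.3 for (ii)). Your argument for (ii) --- resolve the degree-$2$ parametrization to a generically finite morphism $\psi : \widetilde{Z} \to X_{F}$ of degree $2$ from a smooth projective variety with $A_{0}(\widetilde{Z})=0$ (birational invariance of $CH_{0}$ plus universal $CH_{0}$-triviality of $\P^{n-1}$), then use the l.c.i.\ pullback and the projection formula to get $\psi_{*}\psi^{*}=2\cdot\mathrm{id}$ on $CH_{0}(X_{F})$, which kills $2A_{0}(X_{F})$ because $\psi^{*}$ factors through $A_{0}(\widetilde{Z})=0$ --- is exactly the standard argument behind that citation, so this part is fine.

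One detail in (i) should be corrected, even though it is harmless since you defer to the classical reference. Your ``alternative'' construction from a single point $P\in X(\C)$ does not produce a unirational parametrization: the line joining $P$ to a general point meets $X$ in $P$ plus \emph{two} residual points, with no canonical choice between them, so what you get is a dominant rational map $X \dots \to \P^{n-1}$ of degree $2$ (projection from $P$), i.e.\ a map in the wrong direction; it does not exhibit $\P^{n-1}$ dominating $X$. The correct classical construction is the one you describe first: fix a line $L\subset X$, form the $\P^{n-2}$-bundle over $L$ of lines tangent to $X$ along $L$ (a rational variety of dimension $n-1$), and send such a line to its third intersection point with $X$; the fibre over a general $x\in X$ consists of the two points in which the residual conic of the plane spanned by $L$ and $x$ meets $L$, whence degree $2$.
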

C'est bien connu. Pour (i), voir \cite[Thm. 2.2]{ACTP}. Pour (ii), voir  \cite[Cor. 2.3]{ACTP}.

\section{Hypersurfaces cubiques presque diagonales}

\begin{prop}\label{Satz1}
Soit $Y \subset \P^n_{\C}$, $n \geq  3$, une hypersurface cubique lisse
d'\'equation
$$f(x_{0}, \dots, x_{n-1}) + x_{n}^3=0.$$ Supposons que $Y$ est unirationnelle de degr\'e $d$.
Alors toute hypersurface cubique lisse $X\subset \P^{n+2}_{\C}$  d'\'equation
$$f(x_{0}, \dots, x_{n-1}) -  h(x_{n},x_{n+1},x_{n+2})=0$$
  est unirationnelle de degr\'e $3d$.
\end{prop}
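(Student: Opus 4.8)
The plan is to realise $X$, up to birational equivalence, as the target of the degree-$3$ dominant rational map supplied by Proposition~\ref{nihilnovumsubsole} applied to the product of $Y$ with a smooth cubic \emph{surface}; since a cubic surface is rational, the unirationality of degree $d$ of $Y$ will propagate through this degree-$3$ covering to give unirationality of degree $3d$ for $X$.

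First I would record the smoothness one needs. Smoothness of $X$ forces the plane cubic $\{h(x_{n},x_{n+1},x_{n+2})=0\}\subset\P^2_\C$ to be smooth: for any $[c]=[c_{1}:c_{2}:c_{3}]$ with $h(c)=0$, the point $(0,\dots,0,c_{1},c_{2},c_{3})$ lies on $X$, and there every partial derivative $\partial f/\partial x_{i}$ $(0\le i\le n-1)$ vanishes because $f$ is a form of degree $>1$, so smoothness of $X$ at that point forces $\nabla h(c)\neq 0$. Symmetrically $\{f(x_{0},\dots,x_{n-1})=0\}\subset\P^{n-1}_\C$ is smooth (this is also already contained in the smoothness of $Y$). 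Proposition~\ref{nihilnovumsubsole} therefore applies, over $\C$, to the forms $f(x_{0},\dots,x_{n-1})$ (in $n\ge 3$ variables) and $h$ (in $m=3$ variables), both of degree $d=3$: it gives a dominant rational map of degree $3$ from $X_{f}\times_\C X_{h}$ onto $X$ (after the evident relabelling of coordinates), where $X_{f}=\{f(x_{0},\dots,x_{n-1})+x_{n}^3=0\}\subset\P^n_\C$ is precisely $Y$ and $X_{h}=\{h(x_{n},x_{n+1},x_{n+2})+x_{n+3}^3=0\}\subset\P^3_\C$ is a smooth cubic surface.

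Being a smooth cubic surface over $\C$, $X_{h}$ is rational, hence admits a birational parametrisation by $\P^2_\C$. Taking the product of such a parametrisation with a dominant rational map $\P^{n-1}_\C\to Y$ of degree $d$ (the hypothesis on $Y$), and using that $\P^{n-1}_\C\times_\C\P^2_\C$ is birational to $\P^{n+1}_\C$, one obtains a dominant rational map $\P^{n+1}_\C\to Y\times_\C X_{h}$ of degree $d\cdot 1=d$. Since $\dim(Y\times_\C X_{h})=(n-1)+2=n+1=\dim X$, composing with the degree-$3$ map of the previous step yields a dominant rational map $\P^{n+1}_\C\to X$ whose degree is $d\cdot 3=3d$; that is, $X$ is unirational of degree $3d$. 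The only point requiring a little care is the bookkeeping of degrees: degrees of dominant rational maps multiply under composition, and the degree of a product of two parametrisations is the product of the two degrees. The geometric content is merely that a cubic surface is rational and that Proposition~\ref{nihilnovumsubsole} identifies the relevant diagonal hypersurface with $X$.
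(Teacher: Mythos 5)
Your proof is correct and follows essentially the same route as the paper: apply Proposition~\ref{nihilnovumsubsole} to $f$ and $h$, use the rationality of the smooth cubic surface $\{h+w^3=0\}$ to parametrise the factor $X_h$, and compose with the degree~$3$ map from $Y\times_\C X_h$ onto $X$ to get degree $3d$. The only difference is that you spell out the smoothness of $\{h=0\}$ and the degree bookkeeping, which the paper leaves implicit.
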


\begin{proof}
Comme la surface cubique lisse de $\P^3_{\C}$ d'\'equation $h(y_{0},y_{1},y_{2})+y_{4}^3=0$
est, comme toute surface cubique lisse sur $\C$, rationnelle, l'\'enonc\'e r\'esulte de  la proposition \ref{nihilnovumsubsole}.
\end{proof}

Convenons qu'une forme   non singuli\`ere en $s$ variables est de type $(a_{1}, \dots,a_{r})$
 si elle est somme de $r$  formes $\Phi_{i}$ \`a variables s\'epar\'ees, que $a_{i}$
est le nombre de variables de $\Phi_{i}$, et $s = \sum_{i} a_{i}$.

\begin{theo} \label{Haupsatz1}
Soit $m \geq 1$.

(i) Toute hypersurface cubique  lisse $X$ dans $\P^{6m-1}_{\C}$ de type $(3,\dots,3)$ est unirationnelle
de degr\'e une puissance de 3.

(ii) Toute hypersurface cubique  lisse  $X$  dans $\P^{6m+1}_{\C}$ de type $(3,\dots,3,2)$
est unirationnelle de degr\'e une puissance de 3.

(iii) Toute hypersurface cubique  lisse  $X$  dans
  $\P^{6m+3}_{\C}$ de type $(3,\dots,3,1)$ est unirationnelle de degr\'e une puissance de 3.
  
 (iv) Dans chacun de ces  cas, on a $A_{0}(X_{F})=0$
pour tout corps $F$.
\end{theo}

\begin{proof} (i),  (ii)  et (iii) : C'est une cons\'equence combinatoire de la proposition \ref{Satz1}.
L'hypersurface de type $(3,1)$ est rationnelle. De $(3,1)$ et $(3,1)$ on obtient $(3,3)$.
On a donc $(3,2,1)$. De $(3,2,1)$ et $(3,1)$ on obtient $(3,3,2)$.
On a donc $(3,3,1,1)$. De $(3,3,1,1)$ et $(3,1)$ on obtient $(3,3,3,1)$.
Puis  $(3,3,3,3)$, etc. 

(iv) Compte tenu de la proposition  \ref{degre2}, le pgcd des degr\'es d'unirationalit\'e des
hypersurfaces  $X$ dans (i),  (ii) et (iii)    est \'egal \`a 1. Ceci implique   $A_{0}(X_{F})=0$
pour tout corps $F$.
\end{proof}

\begin{rema} {\rm
 On confrontera les th\'eor\`emes \ref{Haupsatz1}    et \ref{Hauptsatz2}   ci-apr\`es   avec le fait classique que toute
hypersurface cubique lisse $X \subset \P^{2n-1}_{\C}$ de type $(2,\dots,2)$ i.e.  d'\'equation $\sum_{i=1}^n f_{i}(u_{i},v_{i})=0$,
est rationnelle, car elle contient deux  espaces lin\'eaires $\P^{n-1}_{\C}$ sans point commun.}
\end{rema}

\begin{rema}\label{voisin} {\rm
Le th\'eor\`eme \ref{Haupsatz1} s'applique en particulier  aux hypersurfaces cubiques
lisses de dimension 4 donn\'ees  dans $\P^5_{\C}$ par une \'equation
$$f(x_{0},x_{1},x_{2}) + g(x_{3},x_{4},x_{5})=0.$$
Suivant une suggestion de C.~Voisin, montrons  que toute telle hypersurface est  en fait rationnelle.

Soit $S \subset \P^3_{\C} $ la surface cubique lisse d'\'equation 
$$f(x_{0},x_{1},x_{2}) - t^3=0$$
et
$T \subset \P^3_{\C}$ la surface cubique lisse d'\'equation 
$$g(x_{3},x_{4},x_{5}) - t^3=0.$$
Le plan $t=0$ d\'ecoupe une cubique lisse $C \subset S$,
et de m\^eme $D \subset T$.
Chacune des 27 droites de $S$ peut \^etre
d\'ecrite par un morphisme de $\P^1_{\C}$ vers $S$
donn\'e par un syst\`eme de formes lin\'eaires
$( a(u,t), b(u,t), c(u,t),t).$
Les 27 droites de $S$  viennent ainsi par groupes de trois droites coplanaires ayant
m\^{e}me point d'intersection avec $t=0$, qui est   un point de Eckardt de
la surface cubique $S$. Ceci d\'etermine ainsi 9 points 
de la cubique $C$.
De m\^eme chacune des
27 droites de $T$ peut \^etre
d\'ecrite par un morphisme de $\P^1_{\C}$ vers $T$
donn\'e par un syst\`eme de formes lin\'eaires
$ (d(v,t), e(v,t), h(v,t),t).$
Le produit d'une droite $l$ de $S$ et d'une droite $m$ de $T$
s'envoie par l'application rationnelle de $S\times T$ vers $X$
de la proposition \ref{nihilnovumsubsole}
sur un 2-plan $\Pi_{l,m} \subset X \subset \P^5_{\C}$ donn\'e par le morphisme de $\P^2_{\C}$ vers $\P^5_{\C}$
d\'efini par
$$( a(u,t), b(u,t), c(u,t), d(v,t), e(v,t), h(v,t) ).$$

Fixons les droites $l \subset S$ et $m \subset T$. Soit $l_1 \subset S$
une droite qui ne rencontre pas $l$. Soit $m_1 \subset T$
une droite qui rencontre $m$ en un point non situ\'e sur $t=0$.
On v\'erifie alors que les deux plans  $\Pi_{l,m} \subset X \subset \P^5_{\C}$
et  $\Pi_{l_1,m_1} \subset X \subset \P^5_{\C}$ ne se rencontrent pas.
Ceci implique que l'hypersurface cubique {\it $X \subset \P^5_{\C}$ est rationnelle}.}
  \end{rema}

\'Etablissons   un \'enonc\'e d'int\'er\^et g\'en\'eral.

\bpr\label{courbe}
Soit $X/\C$ une vari\'et\'e   projective et lisse telle que $A_{0}(X)=0$.
S'il existe une courbe $\Gamma$ projective,  lisse, connexe et un morphisme $\Gamma \to X$
tels que, pour tout corps $F$, l'application induite
$CH_{0}(\Gamma_{F}) \to CH_{0}(X_{F})$ soit surjective, alors, pour tout corps $F$, l'application
 ${\rm deg}_{F} :CH_{0}(X_{F}) \to \Z$ est un isomorphisme, en d'autres termes $X$ est universellement 
$CH_{0}$-triviale.
\epr

 \begin{proof}  
Soit $J$ la jacobienne de $\Gamma$. Pour tout corps $F$, on a
  $A_{0}(\Gamma_{F})=J(F)$.
Notons $K=\C(X)$ le corps des fonctions de $X$.
L'hypoth\`ese $A_{0}(X)=0$ 
implique que
 la vari\'et\'e d'Albanese de 
$X$ est triviale, comme on peut voir de diverses fa\c cons.
Par exemple,  la combinaison de 
 \cite[Lemma 1.7]{ACTP} et des r\'esultats rassembl\'es dans \cite[Prop. 1.8]{ACTP}
 donne $H^1(X,{\mathcal O}_{X})=0$.
Un point de $J(\C(X))$ d\'efinit une application rationnelle
de $X$ dans $J$,  donc un morphisme de $X$ dans $J$ car une application rationnelle d'une vari\'et\'e lisse dans une vari\'et\'e ab\'elienne est partout d\'efinie.
Mais comme la vari\'et\'e d'Albanese de $X$ est triviale, tout tel morphisme est constant. On a donc $J(\C)=J(\C(X))$.
Ainsi l'image de $A_{0}(\Gamma_{K})$ dans $A_{0}(X_{K})$
est dans l'image de l'application compos\'ee
$$J(\C)= A_{0}(\Gamma) \to A_{0}(X)  \to A_{0}(X_{K}),$$
et donc est nulle car $A_{0}(X)=0$.
Par hypoth\`ese, l'application  $A_{0}(\Gamma_{K}) \to A_{0}(X_{K})$ est surjective. On  a donc
 $ A_{0}(X_{\C(X)})=A_{0}(X_{K})=0$.
Ainsi  (\cite[Lemma 1.3]{ACTP}) la diagonale se d\'ecompose dans $X \times_{\C} X$ et,
  pour tout corps $F$,
on a $A_{0}(X_{F})=0$.
\end{proof}

 \begin{rema}{\rm
 Soit $X$ une $\C$-vari\'et\'e projective et lisse, g\'eom\'etriquement connexe.
Soit $\delta(X)$ le plus petit entier $\delta \leq {\rm dim}(X)$ pour lequel il existe
une $\C$-vari\'et\'e  projective  lisse connexe $Y$ de dimension $\delta $ et un  morphisme $\phi : Y \to X$
tels que, pour tout corps $F$,
l'application
$\phi_{F,*} : CH_{0}(Y_{F}) \to CH_{0}(X_{F})$ soit surjective.
Pour $X$  hypersurface cubique lisse de dimension $\geq 3$ {\it tr\`es g\'en\'erale},   C.~Voisin  \cite[Thm. 5.3]{V} montre  que
$\delta(X)   < {\rm dim}(X) $ implique $\delta(X) =0$. La proposition \ref{courbe} peut se reformuler ainsi :
{\it Pour toute vari\'et\'e $X$ projective et lisse telle que $A_{0}(X)=0$,
$\delta(X) \leq 1$  implique $\delta(X)=0$.}}
\end{rema}

\begin{prop}\label{Satz2}
 Soit $Y \subset \P^n_{\C}$ avec $n \geq  3$ une hypersurface cubique lisse
d'\'equation
$$f(x_{0}, \dots, x_{n-1}) + x_{n}^3=0.$$ Supposons que $Y$ est
universellement $CH_{0}$-triviale.  Alors :

(i) Toute hypersurface cubique  lisse $X \subset \P^{n+1}_{\C}$
d'\'equation
$$f(x_{0}, \dots, x_{n-1}) - g(x_{n},x_{n+1})=0$$ 
est universellement $CH_{0}$-triviale.

(ii) Toute hypersurface cubique lisse $X  \subset \P^{n+2}_{\C}$ d'\'equation
$$f(x_{0}, \dots, x_{n-1}) -  h(x_{n},x_{n+1},x_{n+2})=0$$
 est universellement $CH_{0}$-triviale.
\end{prop}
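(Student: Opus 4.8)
Here is how I would approach the two cases; the common engine is the Shioda--Katsura construction of Proposition~\ref{nihilnovumsubsole}, but the bookkeeping in (i) is more delicate because the auxiliary factor there is an elliptic curve rather than a rational surface.

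\emph{Preliminaries and set-up.} First I would record that the smoothness of $Y$ forces $f(x_0,\dots,x_{n-1})$ to be a nonsingular cubic form — a singular point $[a]$ of $\{f=0\}$ would give the singular point $[a:0]$ of $Y$ — and then that the smoothness of $X$ forces $g$ (resp.\ $h$) to be nonsingular as well, since a singular point $[b]$ of the binary cubic $\{g=0\}$ (resp.\ of the ternary cubic $\{h=0\}$) yields, on setting $x_0=\dots=x_{n-1}=0$, a singular point of $X$. Hence Proposition~\ref{nihilnovumsubsole} applies with auxiliary hypersurface $Z=E:=\{g(x_n,x_{n+1})+w^3=0\}\subset\P^2$ (a smooth plane cubic curve) in case (i), and $Z=S:=\{h(x_n,x_{n+1},x_{n+2})+w^3=0\}\subset\P^3$ (a smooth cubic surface) in case (ii); in either case it produces a dominant rational map $\phi\colon Y\times Z\dashrightarrow X$ of degree $3$. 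I would then resolve its indeterminacy to get a smooth projective $W$, a birational morphism $\mu\colon W\to Y\times Z$, and a morphism $\psi\colon W\to X$ that is dominant and generically finite of degree $3$; the transpose of the graph of $\psi$ furnishes a transfer $\psi^!\colon CH_0(X_F)\to CH_0(W_F)$ with $\psi_*\circ\psi^!=3\cdot\mathrm{id}$ on $0$-cycles (the same mechanism that yields Proposition~\ref{degre2}(ii) from (i)), and by birational invariance of $CH_0$ one has $CH_0(W_F)\cong CH_0((Y\times Z)_F)$ for every $F$.

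\emph{Case (ii).} A smooth cubic surface over $\C$ is rational, hence universally $CH_0$-trivial, so by Lemma~\ref{produit} one has $CH_0((Y\times S)_F)\cong CH_0(S_F)=\Z$, i.e.\ $Y\times S$ is universally $CH_0$-trivial; therefore $A_0(W_F)=0$ for all $F$. From $\psi_*\circ\psi^!=3\cdot\mathrm{id}$ we get $3A_0(X_F)\subseteq\psi_*A_0(W_F)=0$, and since $2A_0(X_F)=0$ by Proposition~\ref{degre2}(ii) (note $\dim X=n+1\geq 4$), we conclude $A_0(X_F)=0$ for every $F$.

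\emph{Case (i).} Here $E$ is a curve of genus $1$, so $Y\times E$ is \emph{not} universally $CH_0$-trivial and the above argument only controls $A_0(X_F)$ up to the image of $A_0(E_F)$; I would instead invoke Proposition~\ref{courbe}. Using $2A_0(X_F)=0$ one has $A_0(X_F)=3A_0(X_F)\subseteq\mathrm{im}(\psi_*)$, and since $\mathrm{im}(\psi_*)$ also contains a $0$-cycle of degree $1$, the map $\psi_*\colon CH_0(W_F)\to CH_0(X_F)$ is surjective for every $F$. Now fix a general point $P\in Y(\C)$; by Lemma~\ref{produit} the map $\iota_P\colon E\to Y\times E$, $e\mapsto(P,e)$, induces an isomorphism $CH_0(E_F)\xrightarrow{\ \sim\ }CH_0((Y\times E)_F)$, which $\mu_*$ identifies with $CH_0(W_F)$. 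Hence the morphism $j\colon E\to X$ extending the rational map $e\mapsto\phi(P,e)$ (defined since $\{P\}\times E$ meets the domain of $\phi$ for general $P$, and a rational map from a smooth projective curve to a projective variety is a morphism) satisfies $j_*=\psi_*\circ\mu_*^{-1}\circ\iota_{P,*}$, so $j_*\colon CH_0(E_F)\to CH_0(X_F)$ is surjective for all $F$. Finally $A_0(X)=0$ — for instance because $X$ is unirational over $\C$ by Proposition~\ref{degre2}(i) (so that $CH_0(X)=\Z$), or because $A_0(X)$ is divisible and killed by $2$ — so Proposition~\ref{courbe} applied to $\Gamma=E$ and the morphism $j$ shows that $X$ is universally $CH_0$-trivial.

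\emph{The main obstacle} is exactly case (i): one cannot propagate $CH_0$-triviality through the degree-$3$ map, because the source $Y\times E$ carries the nontrivial group $A_0(E_F)$, so one must instead manufacture a \emph{curve} mapping to $X$ whose group of $0$-cycles surjects onto $CH_0(X_F)$ for every field $F$ — which is precisely the hypothesis of Proposition~\ref{courbe} — and checking that surjectivity is what forces one to play off the degree-$3$ correspondence against the degree-$2$ datum of Proposition~\ref{degre2}. The remaining points (smoothness of $g,h$, the identity $\psi_*\circ\psi^!=3$, and the birational bookkeeping $j_*=\psi_*\circ\mu_*^{-1}\circ\iota_{P,*}$) are routine.
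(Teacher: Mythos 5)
Your proof is correct and follows essentially the same route as the paper: the Shioda--Katsura degree~$3$ map from $Y\times E$ (resp.\ $Y\times S$), resolution and birational invariance, Lemme~\ref{produit}, the interplay of the degree~$3$ correspondence with $2A_{0}(X_{F})=0$ from la proposition~\ref{degre2}, and la proposition~\ref{courbe} in case (i), with case (ii) settled directly since the cubic surface is rational. The only differences are cosmetic (you define $j$ as the extension of $e\mapsto\phi(P,e)$ rather than via a lift $E\to W$, and in case (ii) either factor's universal triviality can be fed into Lemme~\ref{produit}), and you are in fact slightly more explicit than the paper about the smoothness of $g,h$ and the hypothesis $A_{0}(X)=0$ of la proposition~\ref{courbe}.
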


 \begin{proof}
(i) 
Soit $\Gamma \subset \P^2_{\C}$ la courbe d'\'equation  
$g(u,v) + w^3=0$. La proposition \ref{nihilnovumsubsole} fournit une application
rationnelle dominante, de degr\'e 3,  de $Y  \times_{\C} \Gamma$ vers $X$.
Il existe une vari\'et\'e $W$ projective et lisse, un morphisme birationnel
$W \to Y \times_{\C} \Gamma$ induisant un morphisme $f : W \to X$ dominant g\'en\'eriquement fini
de degr\'e 3.
 Il existe un point $P \in Y(\C)$ tel que le morphisme 
$\Gamma  \to Y \times_{\C} \Gamma$ donn\'e par $M \mapsto (P,M)$ se rel\`eve en
un morphisme $\Gamma \to W$. Soit $F$ un corps contenant $\C$.  
Consid\'erons les applications induites sur les groupes de Chow des z\'ero-cycles
au-dessus de $F$:  $$CH_{0}(\Gamma_{F}) \to CH_{0}(W_{F})
\to CH_{0}(Y_{F} \times_{F} \Gamma_{F}).$$
  La seconde application  est un isomorphisme par invariance birationnelle du groupe de Chow des
z\'ero-cycles sur les vari\'et\'es projectives et lisses. L'application compos\'ee est un isomorphisme
par application du lemme \ref{produit}.  Ainsi l'application $CH_{0}(\Gamma_{F}) \to 
CH_{0}(W_{F})$ est un isomorphisme. 
Comme le morphisme $ W \to X$ est g\'en\'eriquement fini de degr\'e 3, 
le groupe $3 A_{0}(X_{F}) \subset A_{0}(X_{F}) $ est dans l'image de $A_{0} (W_{F})$, et donc dans l'image
de $A_{0}(\Gamma_{F}) \to A_{0}(X_{F})$ via l'application compos\'ee
$\Gamma \to W \to X$. D'apr\`es la proposition \ref{degre2}, on a $2 A_{0}(X_{F})=0$.
Ainsi le groupe  $A_{0}(X_{F})$ est dans l'image de $A_{0}(\Gamma_{F})$.
La proposition \ref{courbe} donne alors $A_{0}(X_{F})=0$.

(ii) La d\'emonstration est   plus simple. La courbe $\Gamma$ est remplac\'ee par
la surface cubique lisse $S$ d'\'equation $g(u,v,t) + w^3=0$, qui comme toute surface cubique lisse est une surface rationnelle,
et donc satisfait $A_{0}(S_{F})=0$ pour tout corps $F$.
\end{proof}

\begin{theo}\label{Hauptsatz2}
Toute hypersurface cubique lisse $X \subset \P^n_{\C}$ de dimension au moins 2
 dont l'\'equation
est donn\'ee par une forme   $\sum_{i}  
\Phi_{i}$, o\`u les $\Phi_{i}$ sont \`a variables
 s\'epar\'ees et chacune  a au plus 3 variables,
est universellement $CH_{0}$-triviale.
\end{theo}
\begin{proof}
Une forme de type $(1,1,1,1)$ d\'efinit une surface cubique lisse, donc rationnelle, donc
universellement $CH_{0}$-triviale.
En appliquant de fa\c con r\'ep\'et\'ee le th\'eor\`eme \ref{Satz2}(i), on obtient  que toute forme  non singuli\`ere diagonale, i.e. de type 
$(1,\dots,1)$ en au moins 4 variables, 
d\'efinit une hypersurface lisse universellement $CH_{0}$-triviale. 
En appliquant le th\'eor\`eme \ref{Satz2} de fa\c con r\'ep\'et\'ee,  on peut  ensuite remplacer chacun des $1$ dans $(1,\dots,1)$ par $2$ ou par $3$.
\end{proof}

\begin{rema}  {\rm
   C'est une question ouverte s'il existe une  hypersurface cubique lisse
 $X \subset  \P^4_{\C}$ avec une application rationnelle $\P^3_{\C} \dots \to X$ de degr\'e impair.
 La d\'emonstration de la proposition \ref{Satz2}(i), via la proposition \ref{courbe},   m\`ene \`a la question a priori plus facile :
Pour $X \subset  \P^4_{\C}$ hypersurface cubique lisse quelconque, existe-t-il
une application rationnelle de degr\'e impair $\Gamma \times_{\C} \P^2 \dots \to X$, avec $\Gamma$
courbe convenable ?  }
\end{rema}

\end{document}